\documentclass[12pt]{amsart}
\usepackage{amsmath, amsthm, amscd, amsfonts, amssymb, graphicx, color}
\usepackage[bookmarksnumbered, colorlinks, plainpages]{hyperref}

\usepackage{graphicx}
\usepackage{caption}
\usepackage{pgf,tikz,pgfplots}
\usepackage{mathrsfs}
\usetikzlibrary{arrows}

\definecolor{ududff}{rgb}{0.30196078431372547,0.30196078431372547,1.}
\definecolor{xdxdff}{rgb}{0.49019607843137253,0.49019607843137253,1.}

\textwidth=16.0cm \textheight=9.1in \topmargin=0.mm
\headheight=0.0mm \oddsidemargin=0.0mm \evensidemargin=0.0mm
\newcommand{\cd}{{ \rm cd}}
\newtheorem{theorem}{Theorem}[section]

\newtheorem{lemma}[theorem]{Lemma}

\newtheorem{remark}[theorem]{Remark}

\newcommand{\Irr}{{\mathrm {Irr}}}
\newcommand{\cod}{{\mathrm {cod}}}

\newcommand{\kernel}{{\mathrm {ker}}}
\newcommand{\PSL}{{\mathrm {PSL}}}

\newcommand{\SL}{{\mathrm {SL}}}

\begin{document}\title[Equivalent version of Huppert's Conjecture on the  codegrees]{Equivalent version of Huppert's Conjecture on the  codegrees}
	\author[A. Bahri, Z. Akhlaghi, B. Khosravi]{Afsane Bahri \& Zeinab Akhlaghi \& Behrooz Khosravi}
	\address{ Dept. of Pure  Math.,  Faculty  of Math. and Computer Sci. \\
		Amirkabir University of Technology (Tehran Polytechnic)\\ 424,
		Hafez Ave., Tehran 15914, Iran \newline }
	\email{afsanebahri@aut.ac.ir}
	\email{z$\_$akhlaghi@aut.ac.ir}
	\email{khosravibbb@yahoo.com}

	\thanks{}
	\subjclass[2010]{20C15, 20D05, 20D60.}
	
	\keywords{Co-degree, simple groups, characterization}
	
	\begin{abstract}
		Let $G$ be a finite group, $\Irr(G)$  the set of all irreducible complex characters of $G$ and $\chi \in \Irr(G)$. Let also $\cod(\chi) = |G: \kernel\chi|/\chi(1)$ and  $ \cod(G) = \{\cod(\chi) \ | \  \chi \in \Irr(G)\} $. In this note, we show that the simple group $\PSL(2,q)$, for a prime power $q>3$, is uniquely determined by the set of its codegree. 
		
	\end{abstract}
	
	\maketitle
	\section{ Introduction}
	
Throughout this note,  $G$ is a finite group and  $\Irr(G)$ is the set of all irreducible complex characters of $G$. Let $\cd(G)$ be the set of all irreducible character degrees of G, that is, $\cd(G) = \{\chi(1) | \chi \in \Irr(G)\}$. In 1990, Huppert proposed the following conjecture:
	
	\smallskip

	{\bf Huppert's Conjecture.} Let $H$ be any finite non-abelian simple group and $G$ a finite group such that $\cd(G) = \cd(H)$. Then, $G\cong H \times A$, where $A$ is abelian.
	
	\smallskip
	Many people were devoted to the study of this problem. An  analogues of Huppert's conjecture can be proposed and studied  for any set of integers related to a finite group. For instance,  a dual version of  Huppert's conjecture  for the set of conjugacy class sizes is  considered in \cite{N, ZHTJM, ZM}. In those  papers, the conjecture verified for some families of simple groups such as $\PSL(2,q)$. 
	
In this paper, we consider a different set of integers related to a finite group $G$.  For an irreducible character $\chi$ of $G$, the codegree of  $\chi$ is defined as $\cod(\chi) = |G: \kernel\chi|/\chi(1) $. Let  $ \cod(G) = \{\cod(\chi) \ | \  \chi \in \Irr(G)\} $. This definition of codegree first appeared in     \cite{Qian},    where the authors studied a graph associated with the set     $ \cod(G) $. The term co-degree of a character had earlier been used in   \cite{chillag}   for a different quantity related to the characters.  Recently, various properties of codegree have been studied in \cite{codegree}, \cite{lewis},      \cite{new} and \cite{sak}. 
	
	In this article, we are concerned with the following conjecture, inspired by Huppert's conjecture:
	
	{\bf Conjucture:} Let $G$ be a finite group and $H$ a non-abelian simple group. If  $\cod(G)=\cod(H)$,  then $G\cong H$.

	\smallskip

	Our main result is  verifing the above   conjecture for all projective special linear groups of degree 2, and it can be the first step toward the proof of the conjecture.
	
	\smallskip
	
	\textbf{Main Theorem.} Suppose that $G$ is a finite group with ${\rm cod}(G)={\rm cod(PSL}(2,q))$, where $q=r^f>3$, for some prime $r$. Then, $G\cong {\rm PSL}(2,q)$.
	
	\smallskip

	Let $G$ be a group acting on a module $M$ over a finite field, and $r$ a prime divisor of $|G/{\bf C}_G(M)|$. If for every $\nu \in M\setminus\{0\}$, ${\bf C}_G(\nu)$ contains a Sylow $r$-subgroup of $G$ as a normal subgroup, then we say the pair $(G,M)$ satisfies $N_r$ (for more details see \cite{Nq}). We use this definition to prove the main result. If $N\unlhd G$ and $\theta\in {\rm Irr}(N)$, then  ${I}_G(\theta)$ denotes the inertia group of $\theta$ in $G$, and ${\rm Irr}(G|\theta)$ denotes the set of all irreducible constituents of $\theta^G$. Moreover, $\Irr(G|N)=\Irr(G)-\Irr(G/N)$. We also mean by $n_p$, the $p$-part of $n$, where $n$ is a natural number. For the rest of notations, we follow \cite{isaacs}.
	

\section{Preliminaries}
 In this section, we collect the lemmas used throughout the paper.  

\begin{lemma}\label{size Nq} \rm(\cite[Proposition 8]{Nq}\rm)
If $(G,M)$ satisfies $N_q$, then $(|M|-1)/(|{\bf C}_M(Q)|-1)=n_q(G)$, where $n_q(G)$ is the number of Sylow $q$-subgroups of $G$ and $Q\in {\rm Syl}_q(G)$.
\end{lemma}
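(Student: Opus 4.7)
The plan is to establish the identity by a double-counting argument on the set
\[
S = \{(\nu, Q) : \nu \in M\setminus\{0\},\ Q \in \mathrm{Syl}_q(G),\ Q \leq \mathbf{C}_G(\nu)\}.
\]
The hypothesis $N_q$ is tailor-made to make one of the two counts trivial, and the other count is a standard orbit-size computation.

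First I would fix $\nu \in M\setminus\{0\}$ and count Sylow $q$-subgroups $Q$ of $G$ with $(\nu,Q)\in S$. By the $N_q$ hypothesis, $\mathbf{C}_G(\nu)$ contains some Sylow $q$-subgroup $P$ of $G$ as a \emph{normal} subgroup. Any Sylow $q$-subgroup of $G$ sitting inside $\mathbf{C}_G(\nu)$ is also a Sylow $q$-subgroup of $\mathbf{C}_G(\nu)$, and since $P$ is normal in $\mathbf{C}_G(\nu)$, Sylow's theorem applied inside $\mathbf{C}_G(\nu)$ forces $P$ to be the unique such subgroup. Thus each $\nu$ contributes exactly one pair, so $|S| = |M|-1$.

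Next I would fix $Q\in\mathrm{Syl}_q(G)$ and count the $\nu$ with $(\nu,Q)\in S$. The condition $Q\leq \mathbf{C}_G(\nu)$ is equivalent to $\nu\in \mathbf{C}_M(Q)$, so the count of such nonzero $\nu$ is $|\mathbf{C}_M(Q)|-1$. Because any two Sylow $q$-subgroups of $G$ are conjugate and $G$ acts on $M$, the fixed-point subgroup $\mathbf{C}_M(Q')$ for a conjugate $Q' = Q^g$ is just the image $g^{-1}\mathbf{C}_M(Q)$, so all Sylow $q$-subgroups of $G$ have the same number of nonzero fixed vectors on $M$. Summing over the $n_q(G)$ Sylow $q$-subgroups yields $|S| = n_q(G)\,(|\mathbf{C}_M(Q)|-1)$, and equating the two expressions gives the desired identity.

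There is essentially no obstacle here beyond being careful with the uniqueness step: one must invoke the $N_q$ condition to conclude that the Sylow $q$-subgroup of $G$ inside $\mathbf{C}_G(\nu)$ is \emph{unique} (not merely to exhibit one), since otherwise a single $\nu$ could contribute many pairs and break the count. Everything else is bookkeeping.
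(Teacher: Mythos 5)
Your double-counting argument is correct and complete. Note, however, that the paper itself offers no proof of this lemma: it is imported verbatim as Proposition 8 of the cited Casolo--Moret\'o reference, so there is no in-paper argument to compare yours against. Your proof is the natural self-contained verification: counting the set $S$ of pairs $(\nu,Q)$ with $Q\leq \mathbf{C}_G(\nu)$ one way gives $|M|-1$, because the $N_q$ hypothesis (a Sylow $q$-subgroup of $G$ sitting \emph{normally} inside $\mathbf{C}_G(\nu)$) forces that Sylow subgroup to be the unique Sylow $q$-subgroup of $\mathbf{C}_G(\nu)$, hence the unique Sylow $q$-subgroup of $G$ contained in $\mathbf{C}_G(\nu)$; counting the other way, conjugacy of Sylow subgroups makes $|\mathbf{C}_M(Q)|$ independent of $Q$, giving $n_q(G)\bigl(|\mathbf{C}_M(Q)|-1\bigr)$. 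You correctly flag the uniqueness step as the only place the normality in $N_q$ is genuinely used. The one pedantic point left implicit is that $|\mathbf{C}_M(Q)|-1\neq 0$, which follows from your own count since $|M|-1>0$ forces some (hence every) Sylow $q$-subgroup to have a nonzero fixed vector.
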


\begin{lemma}\label{extendAutS} \rm(\cite[Lemma 2]{cdsimple}\rm)
Let $S$ be a non-abelian finite simple group. Then, there exists $1_S\neq\chi\in {\rm Irr}(S)$ that extends to ${\rm Aut}(S)$.
\end{lemma}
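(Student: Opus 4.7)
The plan is to invoke the classification of finite simple groups and treat each of the three families of non-abelian simple groups in turn, exhibiting in each case a concrete nontrivial irreducible character that is shown to extend to the full automorphism group. The families to handle are the alternating groups $A_n$ ($n\ge 5$), the simple groups of Lie type, and the $26$ sporadic simple groups together with the Tits group.

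For the alternating groups $A_n$ with $n\ge 5$, I would take the irreducible character of $S_n$ labeled by the partition $(n-1,1)$, of degree $n-1$. Since $(n-1,1)$ is not self-conjugate for $n\ge 4$, this character restricts irreducibly to $A_n$; as $\Aut(A_n)=S_n$ for $n\ne 6$, the corresponding $S_n$-character provides the required extension. The exceptional case $n=6$ is dealt with by direct inspection of the character table of $A_6$ and of $\Aut(A_6)\cong S_6{\cdot}2$, where one checks that, e.g., the character of degree $5$ extends to $\Aut(A_6)$.

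For the simple groups of Lie type the natural candidate is the Steinberg character $\mathrm{St}$. It is irreducible of degree equal to the order of a Sylow subgroup in the defining characteristic, and it is the unique irreducible character of that degree, hence invariant under every automorphism of $S$. The stronger statement that $\mathrm{St}$ actually \emph{extends} to $\Aut(S)$ follows from its realisation as the alternating sum $\sum_{J}(-1)^{|J|}\,\mathrm{Ind}_{P_J}^{S}(1_{P_J})$ over subsets $J$ of simple roots, a construction manifestly equivariant under diagonal, field, and graph automorphisms; alternatively one can appeal to Schmid's result that every unipotent character of a group of Lie type extends to its inertia group in $\Aut(S)$. This is the step I expect to require the most care, because one has to keep track of how the different types of outer automorphism interact, and deal with the small number of exceptional Lie-type simple groups (for example $\mathrm{PSL}(2,q)$ in very small characteristics) where a direct verification is cleaner.

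The sporadic simple groups and the Tits group are then a finite check: for each such $S$ one reads off from the Atlas a nontrivial irreducible character stable under $\mathrm{Out}(S)$, and uses the Atlas information on Schur multipliers and $H^2(\mathrm{Out}(S),\mathbb{C}^{\times})$ to rule out any obstruction to extending it. This is routine and introduces no new ideas, so the main obstacle remains the uniform treatment of the Lie-type case via the Steinberg character.
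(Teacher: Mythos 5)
The paper offers no proof of this lemma: it is quoted verbatim from Malle--Moret\'o \cite[Lemma 2]{cdsimple}, and your outline is essentially a reconstruction of the proof given there (restriction of the degree-$(n-1)$ character of $S_n$ for alternating groups, the Steinberg character together with Schmid's extension theorem for groups of Lie type, and an Atlas check for the sporadic groups and the Tits group). So the strategy is sound and matches the source.

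One concrete claim in your sketch is false, however: the degree-$5$ characters of $A_6$ do \emph{not} extend to ${\rm Aut}(A_6)\cong{\rm P\Gamma L}(2,9)$. Viewing $A_6$ as ${\rm PSL}(2,9)$, the two irreducible characters of degree $(q+1)/2=5$ are interchanged by the diagonal outer automorphism (they fuse into a single irreducible character of ${\rm PGL}(2,9)$ of degree $10$), so they are not even ${\rm Aut}(A_6)$-invariant; they extend to $S_6$ but not to the full automorphism group, whose outer part is $C_2\times C_2$. The correct witness for $A_6$ is the Steinberg character of degree $9$ (unique of its degree, hence invariant, and extending by Schmid's theorem), or one can verify the degree-$10$ character directly from the Atlas. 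Since you only offered degree $5$ as an illustrative example inside a ``direct inspection'' step, this does not break the architecture of the argument, but as written the check you propose would fail and needs to be replaced. The rest of the outline (in particular delegating the extension of ${\rm St}$ to Schmid rather than relying solely on the alternating-sum formula, and handling the Tits group with the sporadics) is correct.
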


\begin{lemma}\label{extendNtoG} \rm(\cite[Lemma 2.5]{extendNtoG}\rm)
Let $N$ be a minimal normal subgroup of $G$ such that $N=S_1\times \dots \times S_t$, where $S_i\cong S$ is a non-abelian simple group. If $\chi\in {\rm Irr}(S)$ extends to ${\rm Aut}(S)$, then $\chi\times \dots \times \chi\in {\rm Irr}(N)$ extends to $G$.
\end{lemma}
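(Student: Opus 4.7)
The plan is to exploit the wreath-product description $\Aut(N)\cong \Aut(S)\wr S_t$, valid because $N$ is a direct product of isomorphic copies of a non-abelian simple group. Conjugation gives a homomorphism $G\to\Aut(N)$ with kernel $\Centralizer_G(N)$, and minimality of $N$ forces the induced action of $G$ on $\{S_1,\ldots,S_t\}$ to be transitive. Setting $\theta:=\chi\times\cdots\times\chi$, one first checks that $\theta$ is $G$-invariant: the permutation part of the action merely shuffles identical tensor factors, while the diagonal part acts on each $S_i$ by some element of $\Aut(S)$ under which $\chi$ is fixed (because it extends to $\Aut(S)$).

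First I would fix an extension $\tilde\chi\in\Irr(\Aut(S))$ of $\chi$, afforded by a representation $\rho\colon\Aut(S)\to GL(V)$. On the base subgroup $B:=\Aut(S)^t$ of $W:=\Aut(S)\wr S_t=B\rtimes S_t$, the tensor representation $\rho^{\otimes t}\colon B\to GL(V^{\otimes t})$ affords $\tilde\chi\times\cdots\times\tilde\chi$. I would then extend to all of $W$ by letting $\sigma\in S_t$ act on $V^{\otimes t}$ via tensor-factor permutation, $\sigma(v_1\otimes\cdots\otimes v_t)=v_{\sigma^{-1}(1)}\otimes\cdots\otimes v_{\sigma^{-1}(t)}$. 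A direct check of the identity $\sigma\,\rho(b_1,\ldots,b_t)\,\sigma^{-1}=\rho(b_{\sigma^{-1}(1)},\ldots,b_{\sigma^{-1}(t)})$ confirms this yields a homomorphism $W\to GL(V^{\otimes t})$, whose character $\tilde\Psi\in\Irr(W)$ extends $\tilde\chi\times\cdots\times\tilde\chi$.

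Next, I would pull $\tilde\Psi$ back along the composite $G\twoheadrightarrow G/\Centralizer_G(N)\hookrightarrow\Aut(N)=W$, obtaining a character $\tilde\theta$ of $G$. To verify $\tilde\theta|_N=\theta$, note that $\Center(N)=1$ forces $N\cap \Centralizer_G(N)=1$, so the image of $N$ in $G/\Centralizer_G(N)\le W$ is exactly $\mathrm{Inn}(N)=\mathrm{Inn}(S)^t\subseteq B$. Under the canonical identification $N\cong \mathrm{Inn}(N)$, the restriction $\tilde\Psi|_{\mathrm{Inn}(N)}$ becomes $(\tilde\chi\times\cdots\times\tilde\chi)|_{\mathrm{Inn}(S)^t}=\chi\times\cdots\times\chi=\theta$, as required.

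The main obstacle is the wreath-product extension step, namely verifying that the tensor-permutation formula really defines a representation of $W$. This boils down to the algebraic identity displayed above, which is routine but must be handled carefully so that the extension is genuinely an extension and not merely a projective one. A subsidiary point is justifying the structural isomorphism $\Aut(N)\cong\Aut(S)\wr S_t$; this is standard for a direct product of isomorphic non-abelian simple factors and will be cited rather than reproved.
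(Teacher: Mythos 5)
Your argument is correct, but note that the paper does not prove this statement at all: it is quoted verbatim from Liu--Lu \cite[Lemma 2.5]{extendNtoG}, so there is no in-paper proof to compare against. What you have written is essentially the standard published proof of this fact (extend $\chi$ to $\tilde\chi$ on ${\rm Aut}(S)$, extend $\tilde\chi^{\otimes t}$ to ${\rm Aut}(S)\wr S_t$ by tensor-factor permutation, then pull back along $G\to G/\Centralizer_G(N)\hookrightarrow {\rm Aut}(N)$, using $N\cap\Centralizer_G(N)=\Center(N)=1$ to identify $N$ with ${\mathrm{Inn}}(N)\le {\rm Aut}(S)^t$); the only point worth adding explicitly is that the pullback is irreducible because its restriction to $N$ is the irreducible character $\chi\times\cdots\times\chi$ of the same degree.
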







\begin{lemma}\label{cdsimple}\rm(\cite[Theorem C]{cdsimple}\rm)
Let $G$ be a non-abelian finite simple group. Then, $|{\rm cd}(G)|\geqslant 8$, or one of the following holds:

\item[(\textit{i})] $|{\rm cd}(G)|=4$ and $G={\rm PSL}(2,2^f)$,  $f\geqslant 2$, or

\item[(\textit{ii})] $|{\rm cd}(G)|=5$ and $G={\rm PSL}(2,p^f)$, $p\neq 2$, $p^f> 5$, or

\item[(\textit{iii})] $|{\rm cd}(G)|=6$ and $G={\rm ^2B_2}(2^{2f+1})$, $f\geqslant 1$, or $G={\rm PSL}(3,4)$, or

\item[(\textit{iv})] $|{\rm cd}(G)|=7$ and $G={\rm PSL}(3,3)$, ${\rm A}_7$, ${\rm M}_{11}$ or ${\rm J}_1$.
\end{lemma}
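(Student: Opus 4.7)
The plan is to invoke the Classification of Finite Simple Groups and treat each infinite family (together with the 26 sporadic groups) separately, showing in each case either $|\cd(G)|\geq 8$ or identifying explicitly the finitely many exceptions listed in (\textit{i})--(\textit{iv}).

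For the rank-one groups of Lie type, I would simply read $\cd(G)$ off the known generic character tables. For $\PSL(2,q)$ with $q=2^f$ one has $\cd=\{1,q-1,q,q+1\}$, four degrees, matching (\textit{i}); for $q=p^f$ odd and $q>5$ one obtains $\{1,(q-1)/2,(q+1)/2,q-1,q,q+1\}$, five degrees, matching (\textit{ii}). The Suzuki groups ${}^2B_2(q)$ with $q=2^{2f+1}$ yield exactly six distinct degrees, the contribution to (\textit{iii}). For $\PSU(3,q)$ and the Ree groups ${}^2G_2(q)$, a direct count from the generic tables always gives $|\cd|\geq 8$, so they contribute nothing to the exception list.

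For higher-rank Lie-type groups I would invoke Deligne--Lusztig theory: unipotent characters are parametrized by Lusztig data attached to the Weyl group, and their degrees are given by generic degree polynomials in $q$. For every simple algebraic group of rank $\geq 2$ outside a bounded list of small cases, one exhibits eight unipotent character degrees whose generic degree polynomials are pairwise distinct, forcing $|\cd(G)|\geq 8$ uniformly in $q$. The small-rank/small-$q$ cases ($\PSL(3,q)$, $\PSp(4,q)$, $\PSU(4,q)$, $G_2(q)$, etc.) are then handled by inspection of their tables, yielding $\PSL(3,3)$ (in (\textit{iv})) and $\PSL(3,4)$ (in (\textit{iii})) as the only further exceptions. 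For alternating groups $A_n$ with $n\geq 8$, a short explicit list of partitions (hooks and near-hooks) produces at least eight distinct degrees through the hook-length formula; $A_5\cong\PSL(2,5)$, $A_6\cong\PSL(2,9)$ and $A_7$ are checked directly and absorbed into (\textit{i}), (\textit{ii}), (\textit{iv}). For the 26 sporadic simple groups, direct inspection of the \textsc{atlas} gives $|\cd|\geq 8$ in every case except $M_{11}$ and $J_1$, which fall into (\textit{iv}).

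The main obstacle is the Lie-type case in full generality: one must simultaneously establish a uniform lower bound $|\cd|\geq 8$ across all ranks and all fields, and certify that no additional small-parameter exception is hidden among families such as $\PSp(4,q)$ or $\PSU(4,q)$ over small fields. The cleanest route is to exploit the fact that for a fixed Weyl-group type the unipotent degrees depend polynomially on $q$, so any coincidence of degrees for infinitely many $q$ forces an identity of generic degree polynomials; this reduces the bound to a finite combinatorial problem in each type, while the residual anomalies (e.g.\ $\PSp(4,2)\cong S_6$ and $\PSU(4,2)\cong\PSp(4,3)$) are eliminated by explicit individual inspection.
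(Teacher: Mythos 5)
The paper does not actually prove this lemma: it is imported verbatim as Theorem~C of Malle--Moret\'o \cite{cdsimple}, so there is no internal proof to compare against. Your outline is, in broad strokes, the strategy of that reference --- invoke the classification, then argue family by family, handling the Lie-type groups by producing enough unipotent character degrees via generic degree polynomials and disposing of the finitely many parameter values at which distinct polynomials could take equal values. As a reconstruction of the cited proof the plan is the right one, and your reduction (``a coincidence of degrees for infinitely many $q$ forces an identity of generic degree polynomials'') is sound, since two distinct polynomials agree at only finitely many points; the residual small-rank and small-field cases do then require the explicit inspection you describe.

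There is, however, one concrete error. Your stated degree set for $\PSL(2,q)$ with $q=p^f$ odd, $q>5$, namely $\{1,(q-1)/2,(q+1)/2,q-1,q,q+1\}$, has six elements while you claim five, and it is wrong: only one of the two half-degrees occurs, namely $(q+\epsilon(q))/2$ with $\epsilon(q)=(-1)^{(q-1)/2}$ (there are two irreducible characters of that single common degree). The correct set is $\{1,(q+\epsilon(q))/2,q-1,q,q+1\}$, giving $|\cd(G)|=5$ as in case (\textit{ii}); as written, your computation contradicts the very statement being proved. The distinction is not cosmetic for this paper: the codegree $q(q-\epsilon(q))$ recorded in Remark~\ref{codpsl} is precisely $|G|/\chi(1)$ for the character of degree $(q+\epsilon(q))/2$, and the main argument repeatedly relies on there being exactly one such exceptional codegree. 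The remainder of your sketch (six degrees for the Suzuki groups, sporadic and small alternating groups by inspection, hook-length arguments for $A_n$ with $n\geqslant 8$) is consistent with the known data.
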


\begin{remark}\label{codpsl}

By \cite{white}, if $q=2^f\geqslant 4$, then ${\rm cod}({\rm PSL}(2,q))=\{1,q(q-1),q(q+1),q^{2}-1\}$, and if $q>5$ is an odd prime power, then ${\rm cod}({\rm PSL}(2,q))=\{1,q(q-1)/2,q(q+1)/2,(q^2-1)/2,q(q-\epsilon(q))\}$, where $\epsilon(q)=(-1)^{(q-1)/2}$.
\end{remark}





\section{\bf{Main Results}}

\begin{remark}\label{perfect}
	Let $G$ be a group such that ${\rm cod}(G)={\rm cod(PSL}(2,q))$, where $q>3$ is a prime power. Then, $G$ is a perfect group since otherwise ${\rm cod}(G/G')\subseteq {\rm cod}(G)$ contains a prime power.
\end{remark}

\begin{lemma}\label{G/N}
	Suppose that ${\rm cod}(G)={\rm cod(PSL}(2,q))$, where $q>3$ is a prime power. If $N$ is a maximal normal subgroup of $G$, then $G/N\cong {\rm PSL}(2,q)$.
\end{lemma}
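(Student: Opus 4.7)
The plan proceeds in three main stages. First, I identify $G/N$ as a non-abelian simple group. Since $N$ is a maximal normal subgroup, $G/N$ is simple. The perfectness of $G$ from Remark~\ref{perfect} rules out $G/N$ being cyclic of prime order (otherwise $G' \subseteq N \subsetneq G$, contradicting $G = G'$), so $G/N$ is a non-abelian simple group. Inflation gives $\cod(G/N) \subseteq \cod(G) = \cod(\PSL(2, q))$; since every nontrivial irreducible character of a simple group has trivial kernel, the map $\chi \mapsto |G/N|/\chi(1)$ is a bijection between $\Irr(G/N)$ and $\cod(G/N)$, so $|\cod(G/N)| = |\cd(G/N)|$.

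Second, I narrow the isomorphism type of $G/N$. By Remark~\ref{codpsl}, $|\cod(\PSL(2, q))| \leq 5$, hence $|\cd(G/N)| \leq 5$. Combined with Lemma~\ref{cdsimple}, this forces $G/N \cong \PSL(2, q')$ for some prime power $q'$ with either $q' = 2^{f'}$ (for some $f' \geq 2$) or $q'$ an odd prime power satisfying $q' \geq 7$.

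Third, I show $q' = q$ by direct codegree comparison using Remark~\ref{codpsl}. If $q$ is a power of $2$, then $|\cod(\PSL(2, q))| = 4$, so $q'$ must also be a power of $2$; the unique nontrivial odd element of each codegree set gives $(q')^2 - 1 = q^2 - 1$, hence $q' = q$. If $q$ and $q'$ are both odd with $q, q' \geq 7$, then $\cod(G/N) = \cod(\PSL(2, q))$ (both have five elements), and matching the smallest nontrivial element gives $q(q - 1)/2 = q'(q' - 1)/2$, so $q' = q$. The case $q = 5$ is handled via the isomorphism $\PSL(2, 5) \cong \PSL(2, 4)$, reducing it to the even case.

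The main obstacle is the cross-parity case, where $q$ is odd with $q > 5$ and $q'$ is a power of $2$. Here $(q')^2 - 1$ is odd and at least $15$, so it must equal the unique nontrivial odd codegree of $\PSL(2, q)$, namely $q(q-1)/2$ if $q \equiv 3 \pmod 4$ and $q(q+1)/2$ if $q \equiv 1 \pmod 4$ (the remaining nontrivial codegrees have $2$-part at least $2$). The two codegrees $q'(q'-1)$ and $q'(q'+1)$ of $G/N$ must then coincide with two codegrees of $\PSL(2, q)$ both of $2$-part $q'$; using that these two quantities differ by $2q'$ and running through the short case analysis leaves only the spurious solutions $q \in \{3, 5\}$, contradicting $q > 5$. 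This completes the identification $G/N \cong \PSL(2, q)$.
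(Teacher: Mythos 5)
Your proposal is correct and follows essentially the same route as the paper: pass to the simple quotient $G/N$, use $|\cod(S)|=|\cd(S)|$ for non-abelian simple $S$ together with Lemma~\ref{cdsimple} to force $G/N\cong\PSL(2,q')$, then compare the codegree sets of Remark~\ref{codpsl} via parity and $2$-parts to conclude $q'=q$. The only differences are cosmetic: you match the smallest nontrivial codegree in the odd--odd case and use the difference $2q'$ in the cross-parity case where the paper instead runs divisibility arguments, and you explicitly dispose of $q=5$ via $\PSL(2,5)\cong\PSL(2,4)$, a case the paper leaves implicit.
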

\begin{proof}
	Suppose that $N$ is an arbitrary maximal normal subgroup of $G$. By Remark \ref{perfect}, $G/N$ is a non-abelian simple group.
		First of all, assume that $q=2^f$ with $f\geqslant 2$. By Remark \ref{codpsl}, $|{\rm cod}(G)|=4$. Since ${\rm cod}(G/N)\subseteq {\rm cod}(G)$ and $G/N$ is a non-abelian simple group, by Lemma \ref{cdsimple}, we conclude that $|{\rm cod}(G/N)|=4$. Moreover, Lemma \ref{cdsimple} implies that $G/N\cong {\rm PSL}(2,2^\alpha)$, where $\alpha\geqslant 2$. By comparing the elements in ${\rm cod}(G/N)$ and ${\rm cod}(G)$, we get that $\alpha=f$. Thus, $G/N\cong{\rm PSL}(2,q)$, as wanted.

	Now, let $q=p^f$, where $p$ is an odd prime such that $p^f>5$. We know that ${\rm cod}(G)=\{1,p^f(p^f-1)/2,p^f(p^f+1)/2,(p^{2f}-1)/2,p^f(p^f-\epsilon(q))\}$, where $\epsilon(q)=(-1)^{(q-1)/2}$. Since $G/N$ is a non-abelian simple group, $|{\rm cod}(G/N)|=4$ or $5$. Firstly, assume that  $|{\rm cod}(G/N)|=4$. Using Lemma \ref{cdsimple}, $G/N\cong {\rm PSL}(2,2^\alpha)$, where $\alpha\geqslant 2$. Thus, ${\rm cod}(G/N)=\{1,2^\alpha(2^\alpha-1),2^\alpha(2^\alpha+1),2^{2\alpha}-1\}$.	
	Obviously, $|(p^{2f}-1)/2|_2=|p^f(p^f-\epsilon(q))|_2$ and $2 \nmid p^f(p^f+\epsilon(q))/2$. Hence,  $p^f(p^f-\epsilon(q))=2^{\alpha}(2^\alpha-\zeta)$ and $(p^{2f}-1)/2=2^{\alpha}(2^{\alpha}+\zeta)$, where $\zeta\in \{\pm 1\}$. Therefore, $2^{\alpha}$ divides $(p^f-\epsilon(q))$ and $p^f$ divides $(2^{\alpha}-\zeta)$. Thus, $2^{\alpha}\leq p^f+1\leq 2^{\alpha}+2$. Hence, we get that either $p^f+1=2^{\alpha}$ or $p^f+1=2^{\alpha}+2$. In both cases, since $(p^{2f}-1)/2=2^{\alpha}(2^{\alpha}+\zeta)$,  we get that $2^{\alpha}=p^f-\epsilon(q)$ and $(p^f+\epsilon(q))/2=2^{\alpha}+\zeta$. Now, since $\epsilon(q) =\pm 1$, we get a contradiction.

	Therefore,  $|{\rm cod}(G/N)|=5$ and by Lemma \ref{cdsimple}, $G/N\cong {\rm PSL}(2,r^k)$, where $r$ is an odd prime with $r^k>5$. Hence, ${\rm cod}(G/N)=\{1,r^k(r^k-1)/2,r^k(r^k+1)/2,(r^{2k}-1)/2,r^k(r^k-\epsilon(r))\}$, where $\epsilon(r)=(-1)^{(r-1)/2}$. Now, assume that $r\not =p$.  Note that $r^k$ divides three elements in ${\rm cod}(G/N)$, and so $r^k$ must divide at least three elements in ${\rm cod}(G)$. Therefore, since $(p^f-1, p^f+1)=2$, we get that  $r^k$ must divide $(p^f-\epsilon(q))/2$. By the same discussion, $p^f$ divides $(r^k-\epsilon(r^k))/2$.  Hence, $r^k\leq (p^f+1)/2$ and $p^f\leq (r^k+1)/2$, implying that $2r^k-1\leq p^f\leq (r^k+1)/2$, a contradiction. Consequently, $r=p$,  yielding that $r^k=p^f$, as wanted. 
\end{proof}

{\bf Proof of the Main Theorem}
By the assumption, ${\rm cod}(G)={\rm cod(PSL}(2,q))$, where $q=r^f>3$, for some prime $r$. Let $N$ be a maximal normal subgroup of $G$. Thus, Lemma \ref{G/N} implies that $G/N\cong {\rm PSL}(2,q)$. It is remain to prove that $N=1$. On the contrary, suppose that $G$ is a counterexample with minimal order, i.e.  there exists a non-trivial maximal normal subroup $N$ of $G$ such that $G/N\cong {\PSL}(2,q)$.  We claim that $N$ is a minimal normal subgroup of $G$. Assume that there exists $1<M\unlhd G$ such that $M< N$. Note that since $\cod(G/N)\subseteq \cod(G/M)\subseteq \cod(G)$, we conclude that $\cod(G/M)=\cod(G)$.
 Hence, by the minimality of $G$, we get a contradiction.  Now, we consider the following steps:

\bigskip

\textbf{Step (1).\,} We claim that $N$ is an abelian subgroup of $G$.

\bigskip

On the contrary, suppose that $N$ is non-abelian. 
Therefore, $N\cong S^{a}$, where $S$ is a non-abelian simple group, and $a$ a natural number. Lemmas \ref{extendAutS} and \ref{extendNtoG} imply that there exists $1_N\neq\varphi\in {\rm Irr}(N)$ which extends to $\psi\in {\rm Irr}(G)$. Notice that $(\ker\psi)N/N$ is a normal subgroup of $G/N$, implying that $\ker\psi\leq N$ or $(\ker\psi)N=G$. By the minimality of $N$, and the fact that $1_N\neq\varphi=\psi_N$, we get that $\ker\psi=1$ or $\ker\psi\times N=G$. If $\ker\psi=1$, then ${\rm cod}(\psi)=|G|/\psi(1)$ and since $\psi(1)\mid |N|$, $|G/N|=q(q^2-1)/{\gcd(2,q)}$ must divide ${\rm cod}(\psi)\in {\rm cod}(G)$, a contradiction. Hence, assume that  $\ker\psi\times N=G$. Note that by the minimality of $N$, $\ker\psi$ is a maximal normal subgroup of $G$, and so by Lemma \ref{G/N}, $N\cong G/\ker\psi\cong {\rm PSL}(2,q)\cong \ker\psi$. Hence,  $(q(q-1)/\gcd(2,q))^2\in\cod(G)$, a contradiction by Remark \ref{codpsl}.


\bigskip

\textbf{Step (2).\,}   Moreover, ${\bf C}_G(N)=N$ and $\varphi$ is faithful for every $\varphi \in {\rm Irr}(G|N)$.

\bigskip

By Step (1), $N$ is abelian. Note that ${\bf C}_G(N)/N\unlhd G/N$, and so ${\bf C}_G(N)=G$ or ${\bf C}_G(N)=N$. If ${\bf C}_G(N)=N$, then we are done. Hence, assume that ${\bf C}_G(N)=G$, and so $N\leq {\bf Z}(G)$. On the other hand, by Remark \ref{perfect}, $G$ is perfect.
Since $G/N$ is isomorphic to the simple group ${\rm PSL}(2,q)$ and $N\leq G'\cap {\bf Z}(G)$, 
using 
the fact that $|N|$ is a prime power, $G\cong {\rm SL}(2,q)$ or $3.{\rm A}_6$. By \cite[p.8]{white} and the character table of   $3.{\rm A}_6$ (see \cite{atlas}), we get a contradiction.

Now, we show that $\varphi$ is faithful for every $\varphi \in {\rm Irr}(G|N)$. Suppose that there exists $\varphi \in {\rm Irr}(G|N)$ such that $\ker\varphi\neq 1$, and so $(\ker\varphi)N/N\unlhd G/N$. Consequently, $(\ker\varphi)N=G$ or $\ker\varphi\leq N$. If $(\ker\varphi)N=G$, then since $N$ is a normal minimal subgroup of $G$, we get that $G=\ker\varphi\times N$, a contradiction by Remark \ref{perfect}. Hence, $\ker\varphi\leq N$, implying that $\ker\varphi= N$, a contradiction.

\bigskip

\textbf{Step (3).\,}  If $1_N\neq\lambda\in \Irr(N)$, then for all $\theta\in\Irr(I_G(\lambda)|\lambda)$, we have $|I_G(\lambda)|/\theta(1)\in\cod(G)$. Moreover, $\theta(1)\mid |I_G(\lambda)/N|$ and $|N|$ divides $|G/N|$.

\bigskip

Let $1_N\neq \lambda\in {\rm Irr}(N)$. By \cite[Theorem 6.11]{isaacs}, we get that for all $\theta\in\Irr(I_G(\lambda)|\lambda)$, ${\theta}^G\in \Irr(G)$. Note that $N\nleq \ker\theta$, and so by Step (2), $\ker \theta^G=1$, and so ${\rm cod}(\theta^G)=|{ I}_G(\lambda)|/\theta(1)$, as desired.

In addition, since ${\rm cod}(G/N)={\rm cod}(G)$, $|{ I}_G(\lambda)/N||N|/\theta(1)\in {\rm cod}(G/N)$. Using \cite[Theorem 6.15]{isaacs}, we get that $\theta(1)$ divides $|{ I}_G(\lambda)/N|$. Hence, $|N|$ divides the codegree of some irreducible character of $G/N$, implying that $|N|\mid |G/N|$.
\bigskip

\textbf{Step (4).\,} We claim that $|N|\neq q$.
\bigskip

On the contrary, suppose that $|N|=q$, where $q=r^f>3$, for some prime $r$. Hence, there exists $1_N\neq\lambda_0\in {\rm Irr}(N)$ such that ${ I}_G(\lambda_0)/N$ contains a Sylow $r$-subgroup of $G/N$. By Step (3), $|{ I}_G(\lambda_0)/N||N|/\theta_0(1)\in \cod(G)$, for all $\theta_0\in\Irr(I_G(\lambda_0)|\lambda_0)$. Note that $|N|=q$, which implies that $|{ I}_G(\lambda_0)/N|/\theta_0(1)\in \{q-1,q+1,(q-1)/2,(q+1)/2\}$. By the structure of proper subgroups of $G/N\cong {\PSL}(2,q)$ (see \cite{king}), we get that the only possibility for $I_G(\lambda_0)/N$ is being isomorphic to a Frobenius group of order $q(q-1)/\gcd(2,q-1)$. Therefore, $q(q-1)|N|/(\gcd(2,q-1)\theta_0(1))\in {\rm cod}(G)$. Hence, $q=|N|=\theta_0(1)$, for every ${\theta_0}\in {\rm Irr}({ I}_G(\lambda_0)|\lambda_0)$. Using \cite[Lemma 5.2, Theorem 6.2]{isaacs}, we get that $q(q-1)/\gcd(2,q-1)=|{ I}_G(\lambda_0)/N|=q^2k$, where $k$ is a natural number, a contradiction.

\bigskip
\textbf{Step (5).\,} If $q=2^f$, where $f\geqslant 2$ is a natural number, then  for every $1_N\neq\lambda\in{\rm Irr}(N)$, ${ I}_G(\lambda)/N$ is a Frobenius group of order $qd$, where $d\mid (q-1)$,.
\bigskip

Notice that by Step (1), $N$ is an elementary abelian $p$-group, where $p$ is a prime divisor of $|G/N|$.  Let $|N|=p^n$.  Step (2) and the Normalizer-Centralizer Theorem imply that  $G/N$ embeds in ${\rm Aut}(N)$, and so $n> 1$.

Firstly, suppose that $p\neq 2$. By Step (3), $|N|$ divides $|{\rm PSL}(2,2^f)|$, and so $|N|$ divides $2^{2f}-1$. We claim that $|N|\neq 2^f-1$ or $2^f+1$. If $p^n=|N|=2^f+1$, then $(p,n,f)=(3,2,3)$, and so $|N|=9$. Hence,  $G/N\cong {\rm PSL}(2,8)$. Using Step (2), ${\bf C}_G(N)=N$, and so the Normalizer-Centralizer Theorem implies that $|G/N|\mid |{\rm Aut}(N)|$, a contradiction. Also, $|N|=2^f-1$ has no solution since $n>1$. Therefore, $|N|\notin \{2^f-1, 2^f+1\}$. Let $1_N\neq\lambda\in {\rm Irr}(N)$. As we explained in Step (3), for all $\theta\in {\rm Irr}(I_G(\lambda)|\lambda)$, we have $|I_G(\lambda)|/\theta(1)\in {\rm cod}(G)$. Hence,  $|{ I}_G(\lambda)/N|/\theta(1)\in\{(2^{2f}-1)/|N|,2^f(2^f-1)/|N|,2^f(2^f+1)/|N|\}$. Now, by the structure of proper subgroups of $G/N\cong {\rm PSL}(2,2^f)$ (see \cite{king}) and the fact that $|N|\notin \{2^f-1, 2^f+1\}$, we get that the only possibility for ${ I}_G(\lambda)/N$ is being isomorphic to a Frobenius group of order $qd$, where $d$ is a divisor of $q-1$.

Now, assume that $p=2$. Let $\theta\in \Irr(I_G(\lambda)|\lambda)$ for some $1_N\neq\lambda\in\Irr(N)$. Since $N$ is a 2-group, we must have $|{ I}_G(\lambda)/N|/\theta(1)\in\{2^f(2^f-1)/|N|,2^f(2^f+1)/|N|\}$. By taking a look at the proper subgroups of $G/N\cong {\rm PSL}(2,2^f)$ (see \cite{king}), we get that for every $1_N\neq\lambda\in {\rm Irr}(N)$, ${ I}_G(\lambda)/N$ is isomorphic to the alternating group of degree 4, ${\rm PSL}(2,q_0)$, where $q=2^f$ is a power of $q_0$, the dihedral group of order $2(2^f-1)$ or $2(2^f+1)$, the cyclic group of order $2^f-1$ or $2^f+1$ or a Frobenius group of order $2^f(2^f-1)$.

$\bullet$ Suppose that ${ I}_G(\lambda)/N\cong {\rm A}_4$. Note that $|{ I}_G(\lambda)/N||N|/\theta(1)\in {\rm cod}(G)$ and $N$ is a 2-group. Hence, by Step (3),  we must have $3=2^f-1$, and so $f=2$. Hence, $G/N\cong {\rm A}_5$. Using Step (3), $|N|=2$ or $4$, and so by applying the Normalizer-Centralizer Theorem on $N$, we get a contradiction.

$\bullet$ Let ${ I}_G(\lambda)/N\cong {\rm PSL}(2,q_0)$. Since the Schur multiplier of  ${\rm PSL}(2,q_0)$ is equal to 1, by \cite[chapter 11]{isaacs}, $\lambda$ is extendible to ${ I}_G(\lambda)$. Hence, by Gallagher's Theorem \cite[Corollary 6.17]{isaacs}, ${\rm cd}(I_G(\lambda)|\lambda)=\{1,q_0,q_0-1,q_0+1\}$. Now, let $\theta\in\Irr(I_G(\lambda)|\lambda)$ such that $\theta(1)=q_0$. Thus, $q_0(q_0^{2}-1)|N|/\theta(1)=(q_0^{2}-1)|N|\in{\rm cod}(G)$,
 and so $|N|=q$, a contradiction by Step (4). 
 


$\bullet$ Let ${ I}_G(\lambda)/N$ be isomorphic to the dihedral group of order $2d$, where $d=q-1$ or $q+1$. Since the Sylow subgroups of $D_{2d}$ are cyclic, by \cite[Corollaries 11.22,  11.31]{isaacs}, $\lambda$ is extendible to ${ I}_G(\lambda)$, and so $\theta(1)\in {\rm cd}(I_G(\lambda)|\lambda)=\{1,2\}$. Therefore, by Step (3), there exist $\psi_1,\psi_2\in\Irr(G)$ such that $\cod(\psi_1)=2\cod(\psi_2)$, a contradiction by Remark \ref{codpsl}.

%


$\bullet$ Finally, assume that ${ I}_G(\lambda)/N$ is isomorphic to the cyclic group of order $q-1$ or $q+1$. Notice that by Step (3), for all  ${\theta}\in {\rm Irr}({ I}_G(\lambda)|\lambda)$, we have $\theta(1) \mid |{ I}_G(\lambda)/N|$ and $|{ I}_G(\lambda)/N||N|/\theta(1)\in {\rm cod}(G)$. Consequently, $|N|=q$, a contradiction by Step (4).





\bigskip

\textbf{Step (6).\,} Let $q=r^\alpha$, where $r$ is an odd prime such that $r^\alpha>5$. Then,  ${ I}_G(\lambda)/N$ is either a Frobenius group of order $qd$, where $d\mid (q-1)$ or it is a group of order $q$, for every $1_N\neq\lambda\in{\rm Irr}(N)$.

\bigskip

Recall that $N$ is an elementary abelian $p$-group, where $p$ is a prime dividing $|G/N|$. Let $|N|=p^n$, where $n$ is a natural number. Notice also that by the Normalizr-Centralizer Theorem  $n\neq 1$.

By Step (3), if $1_N\neq\lambda\in{\rm Irr}(N)$, then for all $\theta\in {\rm Irr}({ I}_G(\lambda)|\lambda)$, we have $|{ I}_G(\lambda)/N|/\theta(1)\in\{(q^{2}-1)/2|N|,q(q-1)/2|N|,q(q+1)/2|N|,q(q-\epsilon(q))/|N|\}$, where $\epsilon(q)=(-1)^{(q-1)/2}$. In addition, by the structure of proper subgroups of $G/N\cong {\rm PSL}(2,q)$ (see \cite{king}), we have the following possibilities for ${ I}_G(\lambda)/N$:

$\bullet$ Suppose that ${I}_G(\lambda)/N\cong S_4$.  If $\lambda$ is extendible to $I_G(\lambda)$, then by Gallagher's  Theorem \cite[Corollary 6.17]{isaacs}, ${\rm cd}(I_G(\lambda)|\lambda)=\{1,2,3\}$. Hence, by Step (3), there are $\psi_1, \psi_2\in{\rm Irr}(G|\lambda)$ such that ${\rm cod}(\psi_1)=3{\rm cod}(\psi_2)$, a contradiction by Remark \ref{codpsl}. Hence, $\lambda$ is not extendible to $I_G(\lambda)$. Looking at the character degrees of  the Schur cover of  ${\rm S}_4$ and  \cite[chapter 11]{isaacs}, we get that ${\rm cd}(I_G(\lambda)|\lambda)=\{2,4\}$. Moreover, since $\lambda$ is not extendible to $I_G(\lambda)$, by \cite[Theorem 6.26, Corollary 6.27, Corollary 11.22]{isaacs}, the only possibility for $N$ is being a $2$-group. By Step(3), there exist $\psi_1, \psi_2\in{\rm Irr}(G|\lambda)$ such that ${\rm cod}(\psi_1)=2{\rm cod}(\psi_2)$. Consequently, by Remark \ref{codpsl}, ${\rm cod}(\psi_1)=q(q-\epsilon(q))$ and ${\rm cod}(\psi_2)=q(q-\epsilon(q))/2$. Hence, ${\rm cod}(\psi_1)=q(q-\epsilon(q))= 2^{n+2}3$. Since $r\neq 2$, $q=3$, a contradiction by the fact that $q>5$.

$\bullet$ Assume that ${ I}_G(\lambda)/N\cong {\rm A}_5$.  If $\lambda$ is extendible to $I_G(\lambda)$, then by Gallagher's Theorem \cite[Corollary 6.17]{isaacs}, ${\rm cd}(I_G(\lambda)|\lambda)=\{1,3,4,5\}$. Therefore, there exist $\psi_1, \psi_2\in{\rm Irr}(G|\lambda)$ such that ${\rm cod}(\psi_1)=3{\rm cod}(\psi_2)$, a contradiction by Remark \ref{codpsl}. Hence, $\lambda$ is not extendible to $I_G(\lambda)$.
Looking at the character degrees of  the Schur cover of  ${\rm A}_5$, we get that  ${\rm cd}(I_G(\lambda)|\lambda)=\{2,4,6\}$. Thus, there exist $\psi_1, \psi_2\in{\rm Irr}(G|\lambda)$ such that ${\rm cod}(\psi_1)=3{\rm cod}(\psi_2)$,  a contradiction by Remark \ref{codpsl}.


$\bullet$  If  ${ I}_G(\lambda)/N\cong {\rm A}_4$, then by the same discussion as the previous cases, $\lambda$ is not extendible to  $I_G(\lambda)$. Hence,  $N$ is a 2-group. By looking at the character degrees of $\SL(2,3)$, the  covering group of ${\rm A}_4$, we get that ${\rm cd}(I_G(\lambda)|\lambda)=\{2\}$. Hence, ${\rm cod}(\psi)=|{I}_G(\lambda)/N||N|/2= 2^{n+1}3$ for every $\psi\in {\rm Irr}(G|\lambda)$. Noting that $q\neq 3$ and $r\neq 2$, we get that $ 2^{n+1}3=(q^2-1)/2$. Consequently, by  the fact that $\gcd(q-1, q+1)=2$,  we get that either  $(q-1)/2=3$ or $(q+1)/2=3$. Note that $q>5$, and so the latter case does not occur. Hence, $q=7$ and $|N|=4$, a contradiction by Step (2) and the Normalizer-Centralizer Theorem.

 
$\bullet$ Let ${ I}_G(\lambda)/N$ be isomorphic to the Klein four-group,. If $\lambda$ is not extendible to $I_G(\lambda)$, then ${\rm cd}(I_G(\lambda)|\lambda)=\{2\}$ and by \cite[Theorem 6.26, Corollary 6.27, Corollary 11.22]{isaacs}, $N$ is a $2$-group. Hence, there exists $d\in \cod(G)$ which is a power of $2$, a contradiction by Remark \ref{codpsl}. Therefore, $\lambda$ is extendible to $I_G(\lambda)$ and ${\rm cd}(I_G(\lambda)|\lambda)=\{1\}$.     Note that $2\neq r$ does not divide $|I_G(\lambda)/N|$. Hence, by looking at ${\rm cod(PSL}(2,q))$, we get that either $|N|=q$ or  $|{ I}_G(\lambda)/N||N|=4p^n=(q^2-1)/2$. By Step (4), the first case does not occur. The second case also implies that $(q+\zeta)/2=p^n$ and $q-\zeta=4$ for some $\zeta\in \{\pm 1\}$.  Since $q>5$, we get a contradiction.


$\bullet$ Suppose that  ${ I}_G(\lambda)/N$ is isomorphic to the dihedral group of order $d$, where $d\in\{q-1,(q-1)/2,(q+1)/2,q+1\}$. We claim that $\lambda$ is extendible to ${ I}_G(\lambda)$. On the contrary, suppose that $\lambda$ is not extendible to ${ I}_G(\lambda)$. By \cite[Corollaries 6.27, 11.22, 11.31]{isaacs}, we get that the only possibility for $N$ is being a  $2$-group. Note that $|N|=2^n>2$, $\theta(1)\mid |{ I}_G(\lambda)/N|$ and $|{ I}_G(\lambda)/N||N|/\theta(1)\in{\cod}(G)$, for all $\theta\in\Irr(I_G(\lambda)|\lambda)$. Hence, $|N|\in \{(q+1)/2,q+1,q-1,(q-1)/2\}$. If $|N|=(q+1)/2$ is a power of 2, then 4 does not divide $d=q-1$. Thus, the Sylow 2-subgroup of ${ I}_G(\lambda)/N$ is cyclic, and so by \cite[Corollaries 11.22, 11.31]{isaacs}, $\lambda$ is extendible to ${ I}_G(\lambda)$, a contradiction. By the same discussion, we get that for all remaining possibilities for $|N|$ and $d$,  $\lambda$ is extendible to ${ I}_G(\lambda)$, a contradiction. Therefore,  $\lambda$ is extendible to ${ I}_G(\lambda)$, and so ${\rm cd}(I_G(\lambda)|\lambda)=\{1,2\}$. If $N$ is a $r$-group, then
$|N|=q$, a contradiction by Step (4). 
 Hence, $N$ is not a $r$-group, and so  $|N|\in \{(q+1)/2,q+1,q-1,(q-1)/2\}$. Consequently, we get that if $d=q+\xi$ and $|N|=q-\xi$, for $\xi\in\{\pm 1\}$, then  $\theta(1)=2$, for all $\theta\in {\rm Irr}({ I}_G(\lambda)|\lambda)$, a contradiction. Moreover, in the remaining possibilities for $d$ and $|N|$, we must have $\theta(1)=1$, for all $\theta\in {\rm Irr}({ I}_G(\lambda)|\lambda)$, a contradiction.

$\bullet$ Let ${ I}_G(\lambda)/N\cong {\rm PSL}(2,q_0)$, where $q$ is a power of $q_0$.  If $\lambda$ is extendible to $I_G(\lambda)$, then by Gallagher's Theorem \cite[Corollary 6.17]{isaacs}, ${\rm cd}(I_G(\lambda)|\lambda)=\{1,(q_0+\epsilon(q_0))/2,q_0-1,q_0,q_0+1\}$.
  Hence, we get a contradiction by the same argument in Step (5). Therefore, $\lambda$ is not extendible to $I_G(\lambda)$. Using \cite[chapter 11]{isaacs} and the character degrees of  the Schur cover of  ${\rm PSL}(2,q_0)$, we get that if $q_0\neq 9$, then ${\rm cd}(I_G(\lambda)|\lambda)=\{q_0-1,q_0+1,(q_0-\epsilon(q_0))/2\}$. Moreover,  if $q_0=9$, then ${\rm cd}(I_G(\lambda)|\lambda)= \{4,8,10\}$, ${\rm cd}(I_G(\lambda)|\lambda)=\{3,6,9,15\}$  or ${\rm cd}(I_G(\lambda)|\lambda)=\{6,12\}$.
   Firstly, let $q_0\neq 9$ and $\theta\in\Irr(I_G(\lambda)|\lambda)$ such that $\theta(1)=q_0-1$. Thus, by Step (3), $q_0(q_0+1)|N|/2\in\cod(G)$.  Since $r\neq 2$ and $q$ is a power of $q_0$, we conclude that $N$ is a $r$-group, and so $(q_0+1)/2\in \{q-1,(q-1)/2,(q+1)/2,q+1\}$, a contradiction by the fact that $q>q_0$ is a power of $q_0$.
 Hence, $q_0=9$, and so $360|N|/\theta(1)\in {\rm cod}(G)$ for all  $\theta\in\Irr(I_G(\lambda)|\lambda)$.  If ${\rm cd}(I_G(\lambda)|\lambda)= \{4,8,10\}$ or ${\rm cd}(I_G(\lambda)|\lambda)=\{3,6,9,15\}$, then there exist  $\psi_1, \psi_2\in{\rm Irr}(G|\lambda)$ such that ${\rm cod}(\psi_1)=2.5{\rm cod}(\psi_2)$ or ${\rm cod}(\psi_1)=3 {\rm cod}(\psi_2)$, respectively, a contradiction by Remark \ref{codpsl}. Thus, ${\rm cd}(I_G(\lambda)|\lambda)=\{6,12\}$, and so there exist $\psi_1, \psi_2\in{\rm Irr}(G|\lambda)$ such that ${\rm cod}(\psi_1)=2{\rm cod}(\psi_2)$. Consequently, by Remark \ref{codpsl}, ${\rm cod}(\psi_1)=q(q-\epsilon(q))$ and ${\rm cod}(\psi_2)=q(q-\epsilon(q))/2$.  Hence, ${\rm cod}(\psi_1)=q(q-\epsilon(q))=2^3 3^2 5|N|/6$. Since $q>5$ is a power of 3, $N$ must be a 3-group, and so $q-\epsilon(q)=20$. Thus, $q=19$, a contradiction.

$\bullet$ Suppose that ${ I}_G(\lambda)/N\cong {\rm PGL}(2,q_0)$, where $q$ is an even power of $q_0$. 
 Let also $T/N$ be the subgroup of ${ I}_G(\lambda)/N$ which is isomorphic to ${\rm PSL}(2,q_0)$. Firstly, assume that $\lambda$ is extendible to $T$. Then, 
 ${\rm cd}(T|\lambda)=\{1,q_0,q_0-1,q_0+1,(q_0+\epsilon(q_0))/2\}$, where $\epsilon(q_0)=(-1)^{(q_0-1)/2}$. Now, choose $\varphi\in {\rm Irr}(T|\lambda)$ such that $\varphi(1)=q_0+1$. Suppose that $\varphi$ is extendible to ${ I}_G(\lambda)$. Therefore, for some $\theta\in {\rm Irr}({ I}_G(\lambda)|\varphi)$, we have $\theta(1)=q_0+1$. Thus, by Step (3), $q_0({q_0}^2-1)|N|/\theta(1)\in {\rm cod}(G)$.  Since $q$ is a power of $q_0$, we get that $N$ is a $r$-group and $q_0-1\in\{q+1,q-1,(q+1)/2,(q-1)/2\}$, a contradiction by the fact that $q>q_0$ is a power of $q_0$.
Therefore, $\varphi$ is not extendible to ${I}_G(\lambda)$, and so $\varphi^{I_G(\lambda)}\in {\rm Irr}(I_G(\lambda))$. Setting $\theta=\varphi^{I_G(\lambda)}$, we get that $\theta(1)=2\varphi(1)$. Hence, $\theta(1)=2(q_0+1)$. Now, by the similar argument as above, we get that $(q_0-1)/2\in\{q+1,q-1,(q+1)/2,(q-1)/2\}$, a contradiction by the fact that $q>q_0$ is a power of $q_0$. 

Consequently, $\lambda$ is not extendible to $T$, and by the same discussion as the previous case, if $q_0\neq 9$, then ${\rm cd}(T|\lambda)=\{q_0-1,q_0+1,(q_0-\epsilon(q_0))/2\}$. Moreover, if $q_0=9$, then ${\rm cd}(T|\lambda)= \{4,8,10\}$, ${\rm cd}(T|\lambda)=\{3,6,9,15\}$  or ${\rm cd}(T|\lambda)=\{6,12\}$. In the case that $q_0\neq 9$, choose $\varphi$ such that $\varphi(1)=q_0-1$. By the above discussion, for some $\theta\in {\rm Irr}(I_G(\lambda)|\lambda)$, we conclude that if $\varphi$ is extendible to  ${ I}_G(\lambda)$, then $\theta(1)=q_0-1$, and if $\varphi$ is not extendible to  ${ I}_G(\lambda)$, then $\theta(1)=2(q_0-1)$. Exactly similar to the above discussion, we get a contradiction. Hence, $q_0=9$. Firstly, suppose that ${\rm cd}(T|\lambda)= \{4,8,10\}$. Now, consider $\varphi_1,\varphi_2\in {\rm Irr}(T|\lambda)$ such that $\varphi_1(1)=4$ and $\varphi_2(1)=10$. If either both $\varphi_1$ and $\varphi_2$ are extendible to $I_G(\lambda)$ or both $\varphi_1$ and $\varphi_2$ are not extendible to $I_G(\lambda)$,  then there exist $\psi_1, \psi_2\in{\rm Irr}(G|\lambda)$ such that ${\rm cod}(\psi_1)=(5/2){\rm cod}(\psi_2)$, a contradiction by Remark \ref{codpsl}. Hence, exactly one of the characters among  $\{\varphi_1,\varphi_2\}$ is extendible to $I_G(\lambda)$. If $\varphi_1$  is extendible to $I_G(\lambda)$, then there exist $\psi_1, \psi_2\in{\rm Irr}(G|\lambda)$ such that ${\rm cod}(\psi_1)=5{\rm cod}(\psi_2)$, a contradiction by Remark \ref{codpsl}. Also, if $\varphi_2$  is extendible to $I_G(\lambda)$, then there exist $\psi_1, \psi_2\in{\rm Irr}(G|\lambda)$ such that ${\rm cod}(\psi_1)=5/4{\rm cod} (\psi_2)$. By comparing the elements of ${\rm cod(PSL}(2,q))$, and the fact that $q$ is a power of 9, we get that ${\rm cod}(\psi_1)=q(q+1)/2$ and ${\rm cod}(\psi_2)=q(q-1)/2$. Hence, $(q+1)/(q-1)=(5/4)$, and so $q=9$, a contradiction as $q>q_0$.   
  
   Now, assume that ${\rm cd}(T|\lambda)= \{3,6,9,10\}$. Consider also $\varphi_1,\varphi_2\in {\rm Irr}(T|\lambda)$ such that $\varphi_1(1)=3$ and $\varphi_2(1)=9$. Similar to the above discussion, for exactly one element $i\in\{1,2\}$, $\varphi_i$ is extendible to $I_G(\lambda)$. If $\varphi_1$  is extendible to $I_G(\lambda)$, then there exist $\psi_1, \psi_2\in{\rm Irr}(G|\lambda)$ such that ${\rm cod}(\psi_1)=1.5{\rm cod}(\psi_2)$. Hence, by Remark \ref{codpsl}, we get that $q=3$ or $q=5$, a contradiction by the fact that $q$ is a power of $q_0=9$. Thus, $\varphi_2$  is extendible to $I_G(\lambda)$, and so there exist $\psi_1, \psi_2\in{\rm Irr}(G|\lambda)$ such that ${\rm cod}(\psi_1)=6{\rm cod}(\psi_2)$, a contradiction by Remark \ref{codpsl}. Therefore, we must have ${\rm cd}(I_G(\lambda)|\lambda)=\{6,12\}$. Let $\varphi_1,\varphi_2\in {\rm Irr}(T|\lambda)$ such that $\varphi_1(1)=6$ and $\varphi_2(1)=12$. If $12\in \cd(\Irr(I_G(\lambda)|\lambda))$, then by Step (3), $720|N|/12=60|N|\in \cod(G)$. Since $q$ is a power of $q_0=9$, $N$ must be a 3-group and $20\in\{(q-1),(q+1),(q-1)/2,(q+1)/2\}$, a contradiction. Therefore, the only possibility is that $\varphi_1$  is extendible to $I_G(\lambda)$ and $\varphi_2$ is not. As a result, there exist $\psi_1, \psi_2\in{\rm Irr}(G|\lambda)$ such that ${\rm cod}(\psi_1)=4{\rm cod}(\psi_2)$. By Remark \ref{codpsl}, the only possibility is $\epsilon(q)=-1$, $\cod(\psi_1)=q(q+1)$ and $\cod(\psi_2)=q(q-1)/2$. Hence, $q=3$, a contradiction by the fact that $q$ is a power of $q_0=9$.


$\bullet$ Finally,  suppose that there exists $1_N\neq\lambda\in {\rm Irr}(N)$ such that ${ I}_G(\lambda)/N$ is isomorphic to the cyclic group of order $s$, where $s\in\{(q-1)/2,(q+1)/2\}$. By Step (4),  $|N|\in\{q-1,q+1\}$, and so $N$ is a 2-group. Therefore, there exists $1_N\neq\lambda_1\in {\rm Irr}(N)$ such that ${ I}_G(\lambda_1)/N$ contains a Sylow $2$-subgroup of $G/N$. By the structure of Sylow 2-subgroups of ${\rm PSL(2,q)}$, we get that ${ I}_G(\lambda_1)/N$ is isomorphic to a dihedral group, a contradiction as we explained.


\bigskip

\textbf{Step (7).\,}  Final contradiction.

\bigskip
Let $q=r^f$, where $r$ is a prime. Using Steps (5) and (6), for every $1_N\neq\lambda\in{\rm Irr}(N)$, ${I}_G(\lambda)/N$ contains a Sylow $r$-subgroup of $G/N\cong {\rm PSL}(2,q)$. Hence, the pair $(G/N,N)$ satisfies $N_r$. Thus, Lemma \ref{size Nq} implies that $(|N|-1)/(|{\bf C}_N(R)|-1)=n_r(G/N)$, where $R\in {\rm Syl}_r(G/N)$. Let $|N|=p^n$ and $|{\bf C}_N(Q)|=p^m$, where $m$ and $n$ are natural numbers. If $n=s m$, then $(p^m)^{s-1}+(p^m)^{s-2}+\dots+1=q+1$.   Thus, $(p,s,m)=(r,2,f)$, and so $|N|=q^{2}$ which is a contradiction since by Step (3), $|N|\mid |G/N|$. Consequently, we get that $N=1$, as wanted. 


\end{document}